\documentclass[11pt, a4paper]{amsart}

\usepackage[T1]{fontenc}      
\usepackage{newtxtext}

\usepackage{amsmath,amssymb,amsthm,mathtools}
\usepackage{enumitem}
\usepackage{hyperref}
\usepackage[normalem]{ulem} 
\numberwithin{equation}{section}
\newtheorem{theorem}{Theorem}[section]
\newtheorem{cor}[theorem]{Corollary}
\newtheorem{lemma}[theorem]{Lemma}

\theoremstyle{definition} 
\newtheorem{definition}[theorem]{Definition}

\newcommand{\N}{\mathbb{N}}
\newcommand{\coz}{\operatorname{coz}}
\newcommand{\supp}{\operatorname{supp}}

  \usepackage{marginnote}
 \usepackage{geometry,ulem}

\begin{document}

\title{Order Isomorphisms between Positive Cones of \(C_0(X)\)}

\author[N.~Shibata]{Natsumi Shibata}
\address[N. Shibata]{Graduate School of Science and Technology,
Niigata University,
Niigata 950-2181, Japan}
\email{f25a056h@mail.cc.niigata-u.ac.jp}

\author[I.~Matsuzaki]{Izuho Matsuzaki}
\address[I.~Matsuzaki]
{Graduate School of Science and Technology,
Niigata University, Niigata 950-2181, Japan}
\email{matsuzaki@m.sc.niigata-u.ac.jp}

\author[T. Miura]{Takeshi Miura}
\address[T. Miura]
{Department of Mathematics,
Faculty of Science,
Niigata University,
Niigata 950-2181, Japan}
\email{miura@math.sc.niigata-u.ac.jp}

\subjclass{Primary 46J10; Secondary 46B20}
\keywords{order isomorphism, positive cone,
weighted composition operator}

\begin{abstract}
Let \(X\) and \(Y\) be locally compact Hausdorff spaces.
We study order isomorphisms
\[
T:C_0^+(X)\to C_0^+(Y),
\]
where \(C_0(X)\) denotes the Banach space of all
real-valued continuous functions on \(X\) vanishing at infinity,
and
\[
C_0^+(X)=\{f\in C_0(X):f\ge0\}
\]
is its positive cone.

We assume that
$T$ is positive homogeneous. That is,
\[
T(rf)=rT(f)
\qquad (r>0,\,f\in C_0^+(X)).
\]
Under this assumption, we prove that \(T\) is represented
as a weighted composition operator induced by a homeomorphism
from \(Y\) onto \(X\) and a bounded continuous weight function.
Moreover, we show that \(T\) extends uniquely to a linear
order isomorphism between \(C_0(X)\) and \(C_0(Y)\).
\end{abstract}

\maketitle

\section{Introduction and Main Theorem}

In recent years, order-preserving mappings
on positive cones have been actively studied
by Moln\'{a}r and others
(see \cite{dong,gao,hato,lemm,moln}).
Let \(C(X)\) denote the space of all real-valued
continuous functions on a compact Hausdorff space
\(X\), and let
\[
C(X)_{++}=\{f\in C(X): f>0\}.
\]
In 1979, Sch\"affer  investigated order isomorphisms
from \(C(X)_{++}\) onto \(C(Y)_{++}\). Here, a bijection
\[
T:C(X)_{++}\to C(Y)_{++}
\]
is called an order isomorphism if
\[
f<g
\quad\Longleftrightarrow\quad
T(f)<T(g)
\qquad (f,g\in C(X)_{++}).
\]
Assuming the existence of a function
\(w\in C(X)_{++}\) satisfying
\[
T(rw)=rT(w)
\qquad (r>0),
\]
Sch\"affer proved in \cite[Corollary~8.4]{scha}
that \(T\) is a weighted composition operator
induced by a homeomorphism from \(Y\) onto \(X\).
Consequently, when \(X\) and \(Y\) are compact,
the topological structure of the underlying spaces
is completely determined by order isomorphisms
between the corresponding positive cones.

In the theory of \(C^*\)-algebras, one distinguishes the
positive definite cone, consisting of positive invertible elements,
from the positive semidefinite cone, consisting of all positive elements.
Many structural results on positive homogeneous order isomorphisms
concern positive definite cones. In the commutative unital case,
positive definite cones correspond to \(C(X)_{++}\), the cone of strictly positive
continuous functions. In the present paper we deal instead with the
positive semidefinite cone \(C_0^+(X)\) in the non-unital commutative
setting. This distinction is essential, since \(C_0^+(X)\) contains
functions with zeros and, in general, has no strictly positive order unit.

In contrast, for locally compact Hausdorff spaces
\(X\) and \(Y\), we consider order isomorphisms
\(T:C_0^+(X)\to C_0^+(Y)\).
Here, \(C_0(X)\) denotes the Banach space
of all real-valued continuous functions on \(X\)
vanishing at infinity, and
\[
C_0^+(X)=\{f\in C_0(X): f\ge 0\}
\]
is its positive cone. A mapping
\(T:C_0^+(X)\to C_0^+(Y)\)
is called an order isomorphism
if \(T\) is bijective and satisfies
\[
f\le g
\quad\Longleftrightarrow\quad
T(f)\le T(g)
\qquad (f,g\in C_0^+(X)).
\]
To the best of the authors' knowledge,
it is unknown whether a Sch\"affer-type
theorem holds for order isomorphisms
between \(C_0^+(X)\) and \(C_0^+(Y)\).

When \(X\) is locally compact and non-compact,
the existence of a strictly positive function \(w\)
satisfying Sch\"affer's assumption is no longer guaranteed.
In this paper, we consider positive homogeneous order isomorphisms.
More precisely, we assume that
\[
T(rf)=rT(f)
\qquad (r>0,\ f\in C_0^+(X)).
\]
This homogeneity assumption is stronger than Sch\"affer's
one-ray condition. On the other hand, our setting is different
in an essential way: we consider the positive semidefinite cone
consisting of nonnegative functions, rather than the cone of
strictly positive functions.

Our main theorem is as follows.

\begin{theorem}\label{thm:main}
Let \(X\) and \(Y\) be locally compact
Hausdorff spaces, and let
\(T:C_0^+(X)\to C_0^+(Y)\)
be a positive homogeneous order isomorphism.
Then there exist a constant \(\delta>0\),
a unique bounded continuous function
\(\alpha:Y\to[\delta,\infty)\),
and a unique homeomorphism
\(\tau:Y\to X\)
such that
\begin{equation}\label{eq:T}
T(f)(y)=\alpha(y)f(\tau(y))
\end{equation}
for all \(f\in C_0^+(X)\) and \(y\in Y\).

Conversely, every mapping of the form
\eqref{eq:T} is a
positive homogeneous
 order isomorphism.
\end{theorem}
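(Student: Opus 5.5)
The plan is to exploit the fact that an order isomorphism between lattice cones is automatically a lattice isomorphism, and then to read off the homeomorphism from a support argument.

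\textbf{Lattice structure.} Since \(0\) is the least element of \(C_0^+(X)\) and of \(C_0^+(Y)\), we have \(T(0)=0\). Since \(T\) and \(T^{-1}\) are order preserving bijections, \(T\) preserves all finite suprema and infima:
\[
T(f\vee g)=T(f)\vee T(g),\qquad T(f\wedge g)=T(f)\wedge T(g).
\]
In particular \(T\) preserves disjointness, \(f\wedge g=0 \iff T(f)\wedge T(g)=0\).

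\textbf{Support points.} Fix \(y\in Y\). I define \(S_y\) to be the set of points \(x\in X\) such that every neighbourhood \(U\) of \(x\) carries some \(f\) with \(\coz f\subset U\) and \(T(f)(y)>0\).

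First, \(S_y\neq\emptyset\). Surjectivity of \(T\) gives some \(f\) with \(T(f)(y)>0\). Suppose no point of the compact set \(K=\{f\ge \varepsilon\}\) lies in \(S_y\). Cover \(K\) by finitely many "bad" open sets \(U_1,\dots,U_n\). Choose bumps \(\varphi_i\) with \(\coz\varphi_i\subset U_i\), \(0\le\varphi_i\le1\) and \(\max_i\varphi_i=1\) on \(K\). Then
\[
(f-\varepsilon)^+=\max_i \bigl((f-\varepsilon)^+\varphi_i\bigr),
\]
so \(T((f-\varepsilon)^+)(y)=0\) for every \(\varepsilon>0\). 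Combining this with the estimate in the next paragraph gives \(T(f)(y)=0\), a contradiction.

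Second, \(S_y\) is a singleton. Two distinct points of \(S_y\) have disjoint neighbourhoods, hence disjoint \(f,g\) with \(T(f)(y),T(g)(y)>0\). This contradicts \(T(f)\wedge T(g)=T(f\wedge g)=0\).

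I put \(\tau(y)\) for the unique point of \(S_y\).

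\textbf{Key estimate.} If \(f(\tau(y))=0\), then \(T(f)(y)=0\). I expect this step to need care, because additivity is not available. The identity
\[
f=(f-\varepsilon)^+\vee(f\wedge\varepsilon)
\]
holds, and \((f-\varepsilon)^+\) vanishes on a neighbourhood of \(\tau(y)\). By the definition of \(S_y\) together with the finite-max decomposition above, this forces \(T((f-\varepsilon)^+)(y)=0\). Moreover \(f\wedge\varepsilon\le\sqrt\varepsilon\,\sqrt f\), so homogeneity gives
\[
T(f)(y)\le \max\bigl(0,\sqrt\varepsilon\,T(\sqrt f)(y)\bigr)\to0 .
\]

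\textbf{The weight.} Fix \(h\in C_0^+(X)\) with \(h(\tau(y))=1\) and set \(c=f(\tau(y))\). Write
\[
f=\bigl(f\wedge(c+\varepsilon)h\bigr)\vee\bigl(f-(c+\varepsilon)h\bigr)^+ .
\]
The second term vanishes near \(\tau(y)\), so \(T(f)(y)\le(c+\varepsilon)T(h)(y)\). Symmetrically, \((c-\varepsilon)^+T(h)(y)\le T(f)(y)\). Hence
\[
T(f)(y)=\alpha(y)f(\tau(y)),\qquad \alpha(y):=T(h)(y)>0 .
\]
Positivity of \(\alpha(y)\) comes from the existence of \(f\) with \(T(f)(y)>0\). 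Independence of \(\alpha(y)\) from \(h\) follows from the formula itself.

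\textbf{Homeomorphism and bounds.} Applying the same construction to \(T^{-1}\) gives a map \(\sigma:X\to Y\) and a weight \(\beta\). Composing the two representations shows \(\sigma=\tau^{-1}\) and \(\beta(x)\,\alpha(\sigma(x))=1\). Continuity of \(\tau\) follows because \(y\mapsto T(f)(y)/T(h)(y)=f(\tau(y))/h(\tau(y))\) is continuous near any point, for every \(f\); since the functions of \(C_0(X)\) determine the topology of \(X\), \(\tau\) is continuous near that point. The same argument applies to \(\sigma\). Continuity of \(\alpha=T(h)/(h\circ\tau)\) is then local.

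For boundedness of \(\alpha\), argue by contradiction. If \(\alpha(y_n)\ge4^n\) with the points \(\tau(y_n)\) distinct, take \(f=\sup_n 2^{-n}\varphi_n\) with disjoint bumps \(\varphi_n\), \(\varphi_n(\tau(y_n))=1\); this \(f\) lies in \(C_0^+(X)\). Then \(T(f)(y_n)\ge2^n\), contradicting boundedness of \(T(f)\). Repeated points are handled directly, since \(\alpha\) is finite. Applying the same argument to \(\beta\) bounds \(1/\alpha\), which yields \(\delta\).

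\textbf{Uniqueness and converse.} Uniqueness of \(\tau\) and \(\alpha\) follows from Urysohn functions: \(T\) determines \(\alpha(y)f(\tau(y))\) for every \(f\), and such functions separate points of \(X\). The converse is a direct check: every map of the form \eqref{eq:T} is positive homogeneous and order preserving in both directions, and it is bijective because \(\alpha\) is bounded above and below and \(\tau\) is a homeomorphism.
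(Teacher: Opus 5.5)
Your two ``max decompositions'' are not identities. For \(f\ge0\) one has \(f=(f-\varepsilon)^+ + (f\wedge\varepsilon)\) and \(f=\bigl(f\wedge(c+\varepsilon)h\bigr)+\bigl(f-(c+\varepsilon)h\bigr)^+\), with a sum rather than a supremum. For instance, at a point where \(f=3\varepsilon\) you get \((f-\varepsilon)^+\vee(f\wedge\varepsilon)=2\varepsilon\). Since \(T\) is only known to respect \(\vee\), \(\wedge\) and positive scalars, all you can extract from a sum \(A+B\) is \(f\le 2(A\vee B)\).

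In the key estimate this costs nothing. Homogeneity gives \(T(f)(y)\le 2\max\{T((f-\varepsilon)^+)(y),\sqrt{\varepsilon}\,T(\sqrt f)(y)\}\), which still tends to \(0\). So the construction of \(\tau(y)\) via \(S_y\) and the implication \(f(\tau(y))=0\Rightarrow T(f)(y)=0\) survive. In the weight step, however, the factor \(2\) is fatal. You only obtain \(\tfrac{c}{2}\,T(h)(y)\le T(f)(y)\le 2c\,T(h)(y)\), not \(T(f)(y)=T(h)(y)f(\tau(y))\). Everything afterwards uses the exact formula: \(\sigma=\tau^{-1}\), \(\beta(x)\alpha(\sigma(x))=1\), continuity of \(\alpha\), and the lower bound \(\delta\).

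The missing ingredient is a cutoff instead of a truncation. Put \(W=\{z\in X: f(z)<(c+\varepsilon)h(z)\}\), an open neighbourhood of \(\tau(y)\). Take an Urysohn function \(\varphi\) with \(0\le\varphi\le1\), \(\varphi(\tau(y))=1\) and \(\supp(\varphi)\subset W\). Then pointwise
\[
f\le \bigl((c+\varepsilon)h\bigr)\vee\bigl(f(1-\varphi)\bigr),
\]
since on \(W\) the first term dominates and off \(W\) the second equals \(f\). Moreover \(f(1-\varphi)\in C_0^+(X)\) vanishes at \(\tau(y)\), so your key estimate yields \(T(f)(y)\le(c+\varepsilon)T(h)(y)\). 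For \(0<\varepsilon<c\) the lower bound follows in the same way from \((c-\varepsilon)h\le f\vee\bigl((c-\varepsilon)h(1-\psi)\bigr)\), with \(\psi\) supported in \(\{(c-\varepsilon)h<f\}\).

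This is precisely the local order property of Lemma~\ref{lem:local_order}. The paper reaches it through Lemma~\ref{lem:local_equality}, using \(T^{-1}\) and the already constructed homeomorphism; your cutoff version gets it directly from the key estimate. With this repair your route is correct and genuinely different from the paper's:
\begin{itemize}
\item \(S_y\) replaces the \(F_y\)/\(I_y\) intersection arguments;
\item the inequality \(f\wedge\varepsilon\le\sqrt{\varepsilon}\sqrt{f}\) replaces the a priori boundedness of \(T\) (Lemmas~\ref{lem:bounded} and~\ref{lem:I_y-closed});
\item bijectivity of \(\tau\) comes from composing the two representations.
\end{itemize}

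A minor point on bounding \(\alpha\): you need neither distinct points nor disjoint bumps. Distinct points need not admit pairwise disjoint bumps anyway, e.g.\ a sequence together with its limit. The function \(f=\sup_n2^{-n}\varphi_n\ge2^{-n}\varphi_n\) already gives \(f(\tau(y_n))\ge2^{-n}\).
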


As a corollary of the main theorem,
every
positive homogeneous
 order isomorphism on the positive cone
extends uniquely to a linear order isomorphism
on the whole space.

\begin{cor}\label{cor:extension}
Let \(T:C_0^+(X)\to C_0^+(Y)\) be a
positive homogeneous
 order isomorphism.
Then \(T\) extends uniquely to a linear order isomorphism
\[
\widetilde T:C_0(X)\to C_0(Y).
\]
\end{cor}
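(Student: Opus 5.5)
The corollary follows from Theorem~\ref{thm:main}, so the plan is to read off the extension from the representation \eqref{eq:T}. Let \(\alpha:Y\to[\delta,\infty)\) be the bounded continuous weight and \(\tau:Y\to X\) the homeomorphism given by the theorem. We define
\[
\widetilde T(f)(y)=\alpha(y)f(\tau(y))\qquad (f\in C_0(X),\ y\in Y).
\]

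\textbf{Existence.} First we check that \(\widetilde T\) maps \(C_0(X)\) into \(C_0(Y)\).
\begin{enumerate}[label=(\roman*)]
\item The function \(\widetilde T(f)\) is continuous, as a product of continuous functions.
\item It vanishes at infinity. Given \(\varepsilon>0\), the set \(K=\{x:|f(x)|\ge\varepsilon/\|\alpha\|_\infty\}\) is compact. Since \(\tau\) is a homeomorphism, \(\tau^{-1}(K)\) is compact in \(Y\). Outside \(\tau^{-1}(K)\) we have \(|\widetilde T(f)|<\varepsilon\).
\end{enumerate}
Clearly \(\widetilde T\) is linear and agrees with \(T\) on \(C_0^+(X)\).

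The inverse is \(g\mapsto \bigl(g/\alpha\bigr)\circ\tau^{-1}\). This lies in \(C_0(X)\) by the same argument, because \(1/\alpha\) is continuous and bounded by \(1/\delta\). Hence \(\widetilde T\) is a linear bijection.

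For the order property, note that \(\alpha>0\) and \(\tau\) is surjective. Therefore \(f\le g\) holds if and only if \(\alpha\,(f\circ\tau)\le\alpha\,(g\circ\tau)\). So \(\widetilde T\) is a linear order isomorphism.

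\textbf{Uniqueness.} Let \(S:C_0(X)\to C_0(Y)\) be any linear map extending \(T\). Every \(f\in C_0(X)\) can be written as \(f=f^+-f^-\) with \(f^\pm=\max(\pm f,0)\in C_0^+(X)\). Then
\[
S(f)=S(f^+)-S(f^-)=T(f^+)-T(f^-)=\widetilde T(f).
\]
Thus \(S=\widetilde T\). Uniqueness requires only linearity, not the order property.

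The only step needing real care is that \(\widetilde T\) and its inverse preserve vanishing at infinity. This uses that \(\alpha\) is bounded and bounded below by \(\delta>0\), and that \(\tau\) is a homeomorphism and hence proper.
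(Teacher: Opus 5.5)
Your proposal is correct and follows the paper's proof: you define $\widetilde T$ by the weighted composition formula from Theorem~\ref{thm:main}, obtain the order isomorphism from $\alpha>0$, and obtain uniqueness from $f=f^+-f^-$ using only linearity. You also supply details that the paper leaves implicit, namely that $\widetilde T$ and its inverse $g\mapsto (g/\alpha)\circ\tau^{-1}$ preserve vanishing at infinity (using $\delta\le\alpha\le\|\alpha\|_\infty$ and that $\tau$ is a homeomorphism), and that surjectivity of $\tau$ is what gives the reverse order implication.
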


In particular, the positive cone \(C_0^+(X)\)
completely determines the linear order structure
of \(C_0(X)\).

The key idea of the proof is to construct
a correspondence between the underlying spaces
by analyzing intersections of families of zero sets
\(\bigcap Z(f)\).
To achieve this, we first establish
the non-emptiness of intersections of supports
\(\bigcap \supp(f)\).
A crucial role is played by the truncations
\((f-1/n)^+\),
which allow us to obtain compactly supported
functions even in the locally compact setting.
This makes it possible to adapt techniques
that are available in the compact case.

To prove that the order isomorphism
\(T\) is a weighted composition operator,
we exploit the positive homogeneity assumption
to construct the weight function
and thereby determine the precise form of \(T\).

Section~2 begins with the necessary definitions
and notation.
We then establish several auxiliary lemmas
and finally prove Theorem~\ref{thm:main}
and Corollary~\ref{cor:extension}.

\section{Proof of the Main Theorem}

\subsection{Preliminaries}

In the rest of
this manuscript, \(X\) and \(Y\) are
non-empty locally compact Hausdorff spaces.
We denote by \(C_0(X)\) the Banach space
of all real-valued continuous functions on \(X\)
which vanish at infinity,
equipped with the supremum norm \(\|\cdot\|\).
The positive cone of \(C_0(X)\) is defined by
\[
 C_0^+(X)=\{f\in C_0(X): f(x)\ge 0\ \text{for all }x\in X\}.
\]
For \(f,g\in C_0^+(X)\), we write
\[
 f\le g
 \quad\Longleftrightarrow\quad
 f(x)\le g(x)\qquad (x\in X).
\]
With respect to this pointwise order,
\(C_0^+(X)\) is a lattice.
Namely, for \(f,g\in C_0^+(X)\), we put
\[
 (f\wedge g)(x)=\min\{f(x),g(x)\},\quad
 (f\vee g)(x)=\max\{f(x),g(x)\}\qquad
 (x\in X).
\]

For \(f\in C_0^+(X)\), its cozero set and support are denoted by
\[
 \coz(f)=\{x\in X:f(x)>0\},\qquad
 \supp(f)=\overline{\coz(f)},
\]
where the closure is taken in \(X\).
For \(f\in C_0(X)\)
and \(n\in\N\), we use the notation
\[
 \left(f(x)-\frac{1}{n}\right)^+
 =\max\left\{f(x)-\frac{1}{n},0\right\}
 \qquad (x\in X).
\]
Then \((f-1/n)^+\in C_0^+(X)\).

In the rest of this section,
let \(T:C_0^+(X)\to C_0^+(Y)\)
be a
positive homogeneous
order isomorphism.
Then $T^{-1}:C_0^+(Y)\to C_0^+(X)$ is also
a positive homogeneous order isomorphism.

\subsection{Basic properties of the order isomorphism}

Our first goal is to construct a correspondence
between the underlying spaces.
To this end, we investigate
intersections of zero sets
associated with certain families of functions.
The following lemma is the first step in this direction.

\begin{lemma}\label{lem:lattice}
The order isomorphism \(T\) preserves
lattice operations:
\[
 T\left(\bigwedge_{i=1}^n f_i\right)
 =
 \bigwedge_{i=1}^n T(f_i),\qquad
 T\left(\bigvee_{i=1}^n f_i\right)
 =
 \bigvee_{i=1}^n T(f_i),
\]
for every finite family \(f_1,\ldots,f_n\in C_0^+(X)\).
\end{lemma}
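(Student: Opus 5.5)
The plan is to use the purely order-theoretic fact that an order isomorphism between partially ordered sets preserves whatever infima and suprema exist. Since \(C_0^+(X)\) and \(C_0^+(Y)\) are lattices under the pointwise order, the pointwise minimum \(f\wedge g\) is exactly the greatest lower bound of \(\{f,g\}\) in \(C_0^+(X)\), and likewise for \(\vee\). Positive homogeneity is not needed here; only bijectivity and the two-sided order equivalence \(f\le g\Leftrightarrow T(f)\le T(g)\) will be used.

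I will first handle the case \(n=2\) for the infimum. Put \(h=f_1\wedge f_2\). From \(h\le f_i\) we get \(T(h)\le T(f_i)\) for \(i=1,2\), so \(T(h)\le T(f_1)\wedge T(f_2)\). For the reverse inequality, set \(k=T(f_1)\wedge T(f_2)\in C_0^+(Y)\). By surjectivity, \(k=T(g)\) for some \(g\in C_0^+(X)\). Since \(T(g)\le T(f_i)\), the backward direction of the order equivalence gives \(g\le f_i\) for \(i=1,2\). Hence \(g\le h\), and therefore \(k=T(g)\le T(h)\). Antisymmetry of the pointwise order then yields \(T(h)=k\). The supremum is handled by the same argument with all inequalities reversed, again using surjectivity to pull \(T(f_1)\vee T(f_2)\) back to \(C_0^+(X)\).

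The general finite case will then follow by induction on \(n\). Write \(\bigwedge_{i=1}^{n+1}f_i=\bigl(\bigwedge_{i=1}^n f_i\bigr)\wedge f_{n+1}\), apply the two-element case, and then the induction hypothesis; the same is done for \(\vee\). Alternatively, the two-element argument can be run directly with \(n\) functions.

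There is no real obstacle. The only point that needs care is that the reverse inequality uses the surjectivity of \(T\) together with the implication \(T(f)\le T(g)\Rightarrow f\le g\). A mere order-preserving map would only give \(T(f\wedge g)\le T(f)\wedge T(g)\).
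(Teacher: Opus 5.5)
Your proof is correct and follows the paper's argument: monotonicity gives $T(f_1\wedge f_2)\le T(f_1)\wedge T(f_2)$, and the reverse inequality comes from pulling $T(f_1)\wedge T(f_2)$ back to $C_0^+(X)$ (your preimage is exactly the paper's $T^{-1}(T(f_1)\wedge T(f_2))$). The supremum is handled symmetrically and the finite case by induction. As you note, neither argument uses positive homogeneity.
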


\begin{proof}
Let \(f,g\in C_0^+(X)\).
We first prove \(T(f\wedge g)=T(f)\wedge T(g)\).
Since \(f\wedge g\le f\) and \(f\wedge g\le g\),
the order-preserving property of \(T\) gives
\(T(f\wedge g)\le T(f)\)
and \(T(f\wedge g)\le T(g)\).
Hence
\[
 T(f\wedge g)\le T(f)\wedge T(g).
\]
Conversely, since \(T(f)\wedge T(g)\le T(f)\)
and \(T(f)\wedge T(g)\le T(g)\),
the order-preserving property of \(T^{-1}\) gives
\( T^{-1}(T(f)\wedge T(g))\le f\) and
\(T^{-1}(T(f)\wedge T(g))\le g\).
Thus
\[
 T^{-1}(T(f)\wedge T(g))\le f\wedge g.
\]
Applying \(T\), we obtain
\[
 T(f)\wedge T(g)
\le T(f\wedge g).
\]
Therefore \(T(f\wedge g)=T(f)\wedge T(g)\).

The proof for \(\vee\) is analogous.
Since \(f\le f\vee g\) and \(g\le f\vee g\), we have
\(T(f)\vee T(g)\le T(f\vee g)\).
Conversely, from \(T(f)\le T(f)\vee T(g)\)
and \(T(g)\le T(f)\vee T(g)\), we obtain
\[
 f\le T^{-1}(T(f)\vee T(g)),\qquad
 g\le T^{-1}(T(f)\vee T(g)).
\]
Hence
\(f\vee g\le T^{-1}(T(f)\vee T(g))\).
Applying \(T\), we get
\(T(f\vee g)\le T(f)\vee T(g)\).
Thus \(T(f\vee g)=T(f)\vee T(g)\).

The general case follows by induction.
\end{proof}

We next establish a basic property
of order isomorphisms.
Although elementary, it will play
an important role in the subsequent arguments.

\begin{lemma}\label{lem:zero_disjoint}
The following assertions hold:
\begin{enumerate}[label=(\roman*)]
\item \(T(0)=0\);
\item for all \(f,g\in C_0^+(X)\),
\[
 f\wedge g=0
 \quad\Longleftrightarrow\quad
 T(f)\wedge T(g)=0.
\]
\end{enumerate}
\end{lemma}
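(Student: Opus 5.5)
For (i), the plan is to use only that $T$ is an order isomorphism, i.e.\ bijective and order-preserving in both directions. The zero function is the least element of $C_0^+(X)$, and likewise $0$ is the least element of $C_0^+(Y)$. Since $T$ is surjective, there is $h\in C_0^+(X)$ with $T(h)=0$. From $0\le h$ we get $T(0)\le T(h)=0$. As $T(0)\ge 0$ in $C_0^+(Y)$, this forces $T(0)=0$.

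Alternatively, positive homogeneity gives $T(0)=T(2\cdot 0)=2T(0)$, hence $T(0)=0$ directly. I will give the order-theoretic argument as the main one, since it is what is used for $T^{-1}$ as well. Either way, the same reasoning applies to $T^{-1}$, so $T^{-1}(0)=0$.

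For (ii), I combine (i) with Lemma~\ref{lem:lattice}. If $f\wedge g=0$, then
\[
T(f)\wedge T(g)=T(f\wedge g)=T(0)=0 .
\]
Conversely, suppose $T(f)\wedge T(g)=0$. Then $T(f\wedge g)=0=T(0)$, and injectivity of $T$ yields $f\wedge g=0$. Equivalently, one could apply Lemma~\ref{lem:lattice} to $T^{-1}$, which is also a positive homogeneous order isomorphism.

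There is no real obstacle here. The only point needing care is (i), namely identifying $0$ as the minimum of the cone so that the order isomorphism must map it to the minimum; the rest is a direct substitution into Lemma~\ref{lem:lattice}.
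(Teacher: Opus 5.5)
Your proposal is correct and follows essentially the same route as the paper: for (i) you combine surjectivity with order preservation to force $T(0)$ to be the least element of $C_0^+(Y)$ (your preimage $h$ of $0$ is a concrete instance of the paper's observation that $T(0)\le T(f)$ for all $f$), and for (ii) you combine (i) with Lemma~\ref{lem:lattice} and injectivity exactly as the paper does. Your alternative $T(0)=T(2\cdot 0)=2T(0)$ via positive homogeneity is also valid, though the paper does not need it.
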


\begin{proof}
The zero function is the least element
of \(C_0^+(X)\). Hence \(0\le f\) for every \(f\in C_0^+(X)\).
Since \(T\) preserves the order,
we obtain \(T(0)\le T(f)\) for every \(f\in C_0^+(X)\).
Since \(T\) is surjective,
\(T(0)\) is the least element of \(C_0^+(Y)\).
Therefore \(T(0)=0\), which proves the
assertion (i).

By Lemma~\ref{lem:lattice}, \(T(f\wedge g)=T(f)\wedge T(g)\). Hence
\[
 f\wedge g=0
 \quad\Longleftrightarrow\quad
 T(f\wedge g)=T(0)
 \quad\Longleftrightarrow\quad
 T(f)\wedge T(g)=0.
\]
This proves the assertion (ii).
\end{proof}

To analyze the supports of functions in \(F_y\) in the subsequent arguments,
we will utilize truncations of functions. Controlling the behavior of
these truncations requires the boundedness of \(T\), which we establish next.

\begin{lemma}\label{lem:bounded}
The mapping \(T\) is bounded in the following sense:
There exists $M>0$ such that
\[
\|T(f)\|\leq M\|f\|\qquad
(f\in C_0^+(X)).
\]
\end{lemma}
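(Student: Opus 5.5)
We argue by contradiction, combining positive homogeneity with the lattice structure from Lemma~\ref{lem:lattice}. Suppose no such \(M\) exists. By positive homogeneity, \(\|T(f)\|/\|f\|\) is invariant under \(f\mapsto rf\) for \(r>0\) (and \(T(0)=0\) by Lemma~\ref{lem:zero_disjoint}). Hence we may choose \(f_n\in C_0^+(X)\) with \(\|f_n\|=1\) and \(\|T(f_n)\|\ge 4^n\) for every \(n\in\N\). Put
\[
 g=\sum_{n=1}^\infty 2^{-n}f_n .
\]
The series converges absolutely in the Banach space \(C_0(X)\), and all terms are nonnegative, so \(g\in C_0^+(X)\).

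For each \(n\) we have \(2^{-n}f_n\le g\) pointwise. Since \(T\) is order preserving and positive homogeneous,
\[
 2^{-n}T(f_n)=T(2^{-n}f_n)\le T(g).
\]
Because all functions involved are nonnegative, it follows that
\[
 \|T(g)\|\ge 2^{-n}\|T(f_n)\|\ge 2^{n}
\]
for every \(n\). This contradicts \(\|T(g)\|<\infty\), since \(T(g)\in C_0(Y)\) is bounded. Therefore a constant \(M\) exists with \(\|T(f)\|\le M\|f\|\) for all \(f\in C_0^+(X)\); we may take \(M>0\) by enlarging it if necessary.

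Once one thinks of the gliding-hump sum \(g\), the argument is short. The only points to check are the following.
\begin{enumerate}[label=(\alph*)]
\item The sum lies in \(C_0^+(X)\). This holds because \(C_0(X)\) is complete and the positive cone is closed.
\item Homogeneity is used with the positive scalars \(r=2^{-n}\), which is exactly what is assumed.
\item The comparison \(0\le u\le v\) implies \(\|u\|\le\|v\|\) in \(C_0^+(Y)\).
\end{enumerate}
No lattice preservation beyond monotonicity is needed.

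Applying the same argument to \(T^{-1}\), which is also a positive homogeneous order isomorphism, gives a constant \(M'>0\) with \(\|T^{-1}(h)\|\le M'\|h\|\). This lower bound will presumably be useful later, for instance for obtaining \(\delta\).
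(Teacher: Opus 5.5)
Your proof is correct and is essentially the paper's argument: you normalize to \(\|f_n\|=1\) with \(\|T(f_n)\|\ge 4^n\), form the gliding-hump sum \(g=\sum_n 2^{-n}f_n\in C_0^+(X)\), and use monotonicity plus positive homogeneity to get \(\|T(g)\|\ge 2^n\) for every \(n\), a contradiction. As you note yourself at the end, only monotonicity is used, so the opening reference to Lemma~\ref{lem:lattice} is unnecessary.
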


\begin{proof}
Assume, to the contrary, that $T$ is not bounded.
For each $n\in\N$ there exists $f_n\in C_0^+(X)$
such that $\|T(f_n)\|>4^n\|f_n\|$.
Since $T(0)=0$, we have $f_n\neq0$
for all $n\in\N$.
Put $g_n=f_n/\|f_n\|$ for $n\in\N$.
The positive homogeneity of $T$ shows that
$\|T(g_n)\|>4^n$.
Since $\|g_n\|=1$, the series
$g=\sum_{n=1}^\infty g_n/2^n$ converges
in $C_0^+(X)$.
For each $n\in\N$, the order preserving property
of $T$ with $g\geq g_n/2^n$ gives
$T(g)\geq T(g_n/2^n)\geq0$.
Since $T$ is positive homogeneous,
we obtain
\[
\|T(g)\|
\geq\left\|T\left(\frac{g_n}{2^n}\right)\right\|
=\frac{1}{2^n}\|T(g_n)\|
>2^n
\]
for all $n\in\N$.
This contradicts the fact that \(T(g)\in C_0^+(Y)\).
This proves that $T$ is bounded.
\end{proof}

\subsection{The finite intersection property for \(F_y\)}

To construct the correspondence
between the underlying spaces,
we introduce a family of functions
determined by the order isomorphism \(T\).

\begin{definition}
For each \(y\in Y\), define
\[
 F_y=\{f\in C_0^+(X): T(f)(y)>0\}.
\]
\end{definition}

We next prove that every finite family
of supports arising from \(F_y\)
has a non-empty intersection.

\begin{lemma}\label{lem:finite_support}
Let \(y\in Y\). For every finite family \(f_1,\ldots,f_n\in F_y\),
\[
 \bigcap_{i=1}^n \supp(f_i)\ne \emptyset.
\]
\end{lemma}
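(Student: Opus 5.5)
The plan is to derive the statement directly from the preservation of finite meets in Lemma~\ref{lem:lattice}, together with \(T(0)=0\) from Lemma~\ref{lem:zero_disjoint}(i). In fact this argument yields the stronger conclusion that the cozero sets \(\coz(f_i)\) already have a common point.

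Let \(f_1,\ldots,f_n\in F_y\) and put \(h=\bigwedge_{i=1}^n f_i\in C_0^+(X)\).

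\textbf{Step 1.} By Lemma~\ref{lem:lattice},
\[
 T(h)(y)=\Bigl(\bigwedge_{i=1}^n T(f_i)\Bigr)(y)=\min_{1\le i\le n} T(f_i)(y).
\]
Each \(T(f_i)(y)\) is positive because \(f_i\in F_y\), so \(T(h)(y)>0\). In particular \(T(h)\neq 0\).

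\textbf{Step 2.} Since \(T(0)=0\) by Lemma~\ref{lem:zero_disjoint}(i) and \(T(h)\neq 0\), we get \(h\neq 0\). Hence there is a point \(x\in X\) with
\[
 \min_{1\le i\le n} f_i(x)=h(x)>0 .
\]
Then \(x\in\coz(f_i)\subset\supp(f_i)\) for every \(i\), so \(x\in\bigcap_{i=1}^n\supp(f_i)\), and this intersection is non-empty.

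I do not expect a real obstacle here. The only points to check are that the finite meet of elements of \(C_0^+(X)\) stays in \(C_0^+(X)\), so that \(T(h)\) is defined, and that the meet is evaluated pointwise, so that \(T(h)(y)\) is the minimum of the values \(T(f_i)(y)\). Both hold by the definitions of the lattice operations. The later step of passing from finite to arbitrary intersections, presumably using the truncations \((f-1/n)^+\) and Lemma~\ref{lem:bounded}, is where compactness issues will arise, but it is not needed for this lemma.
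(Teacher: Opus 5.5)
Your proof is correct and follows essentially the same route as the paper. Both take the finite meet, get \(T(h)(y)>0\) from Lemma~\ref{lem:lattice}, and conclude \(h\neq 0\) from \(T(0)=0\). The paper then phrases the last step as \(\emptyset\neq\supp(h)\subset\bigcap_{i=1}^n\supp(f_i)\), whereas you pick a point of \(\coz(h)\) directly, which is the same observation.
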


\begin{proof}
Put
\(g=\bigwedge_{i=1}^n f_i\).
By Lemma~\ref{lem:lattice},
\[
 T(g)(y)
 =
 \min_{1\le i\le n} T(f_i)(y).
\]
Since \(f_1,\ldots,f_n\in F_y\), we have
\(T(f_i)(y)>0\) for all \(i\). Hence \(T(g)(y)>0\).
Thus \(g\ne0\) by Lemma~\ref{lem:zero_disjoint}.
Hence \(\coz(g)\ne\emptyset\),
and consequently \(\supp(g)\ne\emptyset\).

Since \(g\le f_i\) for all \(i\), we have
\( \coz(g)\subset \coz(f_i)\)
and hence
\(\supp(g)\subset \supp(f_i)\)
for all \(i\). Therefore
\(\emptyset\ne \supp(g)\subset \bigcap_{i=1}^n \supp(f_i)\).
\end{proof}

Using the boundedness of \(T\), we now show that if a function belongs to \(F_y\),
a sufficiently small truncation of it still belongs to \(F_y\).
This property will be crucial for extracting compactly supported
functions from \(F_y\).

\begin{lemma}\label{lem:I_y-closed}
Let $y\in Y$ and $f\in C_0^+(X)$.
If $f\in F_y$, then there exists $k\in\N$
such that $(f-1/k)^+\in F_y$.
\end{lemma}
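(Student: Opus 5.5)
The plan is to compare \(f\) with the lattice join of its truncation \((f-1/k)^+\) and its "bottom part" \(f\wedge(1/k)\). We then use Lemma~\ref{lem:lattice} to move the join through \(T\), and Lemma~\ref{lem:bounded} to make the bottom part negligible at \(y\). Note that \(T\) is not assumed additive, so the identity \(f=(f-1/k)^+ + f\wedge(1/k)\) cannot be pushed through \(T\) directly. This is the main obstacle, and we get around it by replacing the sum with a join at the cost of a factor \(2\).

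First, for every \(k\in\N\) we have the pointwise identity \(f=(f-1/k)^+ + \bigl(f\wedge \tfrac1k\bigr)\). Here \(f\wedge\frac1k\) means \(\min\{f,1/k\}\), which lies in \(C_0^+(X)\) since \(0\le f\wedge\frac1k\le f\). Since \(a+b\le 2\max\{a,b\}\) for \(a,b\ge0\), we get
\[
f\le 2\Bigl((f-1/k)^+\vee \bigl(f\wedge \tfrac1k\bigr)\Bigr).
\]
Applying the order-preserving property of \(T\), positive homogeneity, and Lemma~\ref{lem:lattice}, this yields
\[
T(f)\le 2\Bigl(T\bigl((f-1/k)^+\bigr)\vee T\bigl(f\wedge \tfrac1k\bigr)\Bigr).
\]

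Next, Lemma~\ref{lem:bounded} gives a constant \(M>0\) with \(\|T(f\wedge \frac1k)\|\le M\|f\wedge\frac1k\|\le M/k\). Since \(f\in F_y\), we have \(T(f)(y)>0\). We choose \(k\in\N\) so large that \(2M/k<T(f)(y)\).

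Evaluating the displayed inequality at \(y\) then gives
\[
T(f)(y)\le 2\max\Bigl\{T\bigl((f-1/k)^+\bigr)(y),\,M/k\Bigr\}.
\]
The maximum cannot be attained by \(M/k\), because \(2M/k<T(f)(y)\). Hence \(T((f-1/k)^+)(y)\ge T(f)(y)/2>0\), that is, \((f-1/k)^+\in F_y\). All steps are routine once the join trick is in place; the only point needing care is that \(f\wedge\frac1k\) and \((f-1/k)^+\) genuinely belong to \(C_0^+(X)\), so that Lemma~\ref{lem:lattice} and Lemma~\ref{lem:bounded} apply.
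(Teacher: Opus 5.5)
Your proposal is correct and is essentially the paper's own argument. The paper's \(h_k=f-(f-1/k)^+\) is exactly your \(f\wedge\frac1k\), and it likewise uses \(f\le 2(f_k\vee h_k)\), Lemma~\ref{lem:lattice}, positive homogeneity, and the bound \(\|T(h_k)\|\le M/k\) from Lemma~\ref{lem:bounded}, with \(k\) chosen so that \(2M/k<T(f)(y)\), to conclude \(T((f-1/k)^+)(y)\ge T(f)(y)/2>0\).
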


\begin{proof}
Put $f_n=(f-1/n)^+$ and $h_n=f-f_n$
for each $n\in\N$.
Then $f_n,h_n\in C_0^+(X)$ with
$\|h_n\|\leq1/n$.
It follows that
\[
f(x)=f_n(x)+h_n(x)
\leq2\max\{f_n(x),h_n(x)\}
\qquad(x\in X),
\]
and hence $f\leq2(f_n\vee h_n)$
for all $n\in\N$.
The order preserving property
and positive homogeneity
with Lemma~\ref{lem:lattice} give
\[
T(f)\leq2(T(f_n)\vee T(h_n)).
\]
Since $f\in F_y$, we have
$T(f)(y)>0$.
Because $T$ is bounded
by Lemma~\ref{lem:bounded},
there exists $M>0$ such that
$\|T(h_n)\|\leq M\|h_n\|$
for all $n\in\N$.
Choose $k\in\N$ such that
$2M/k<T(f)(y)$.
Since $\|h_k\|\leq1/k$,
we have
\[
T(h_k)(y)
\leq\|T(h_k)\|
\leq M\|h_k\|
\leq\frac{M}{k}
<\frac{T(f)(y)}{2}.
\]
By the preceding inequality,
$T(f)(y)\leq2(T(f_k)\vee T(h_k))(y)$.
These two inequalities show that
$T(f_k)(y)>T(h_k)(y)$.
Indeed, if $T(f_k)(y)\leq T(h_k)(y)$, then
\[
T(h_k)(y)
<\frac{T(f)(y)}{2}
\leq
(T(f_k)\vee T(h_k))(y)
=T(h_k)(y),
\]
a contradiction.
Thus $0<T(f)(y)\leq2T(f_k)(y)$.
This proves $f_k\in F_y$.
\end{proof}

\subsection{Construction of the homeomorphism}

We now prove that the supports
of all functions in \(F_y\)
have a non-empty intersection.
This will be the key ingredient
for analyzing intersections
of zero sets and eventually constructing
the correspondence between the underlying spaces.

\begin{lemma}\label{lem:all_supports_intersect}
For each \(y\in Y\),
\[
 \bigcap_{g\in F_y}\supp(g)\ne \emptyset.
\]
\end{lemma}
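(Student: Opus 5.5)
We argue via compactness, using Lemma~\ref{lem:I_y-closed} to pass to a compact set. First, $F_y$ is non-empty. Since $Y$ is locally compact Hausdorff, Urysohn's lemma gives $h\in C_0^+(Y)$ with $h(y)>0$, and surjectivity of $T$ yields $f_0\in C_0^+(X)$ with $T(f_0)=h$, so $f_0\in F_y$. Lemma~\ref{lem:I_y-closed} then provides $k\in\N$ with $f_1=(f_0-1/k)^+\in F_y$. Its support is contained in $K=\{x\in X: f_0(x)\ge 1/k\}$, which is compact because $f_0$ vanishes at infinity. So $F_y$ contains a function whose support lies in the compact set $K$.

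Next, suppose for contradiction that $\bigcap_{g\in F_y}\supp(g)=\emptyset$. Then the closed sets $\supp(g)\cap K$, $g\in F_y$, have empty intersection inside $K$. By compactness of $K$, there are finitely many $g_1,\dots,g_m\in F_y$ with
\[
K\cap\bigcap_{i=1}^m\supp(g_i)=\emptyset .
\]
Since $\supp(f_1)\subset K$, this gives $\supp(f_1)\cap\bigcap_{i=1}^m\supp(g_i)=\emptyset$. But $f_1,g_1,\dots,g_m$ is a finite family in $F_y$, so Lemma~\ref{lem:finite_support} says the intersection of their supports is non-empty. This contradiction proves the claim.

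The only delicate point is obtaining a member of $F_y$ with compact support. This is exactly the role of Lemma~\ref{lem:I_y-closed}, together with the observation that $\supp((f_0-1/k)^+)\subset\{f_0\ge1/k\}$. That inclusion holds because the coz set $\{f_0>1/k\}$ is contained in the closed set $\{f_0\ge 1/k\}$, and this closed set is compact since $f_0\in C_0(X)$. Non-emptiness of $F_y$ is routine from surjectivity of $T$ and Urysohn's lemma.
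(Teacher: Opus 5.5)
Your proposal is correct and follows essentially the same route as the paper: use Lemma~\ref{lem:I_y-closed} to get a truncation $(f_0-1/k)^+\in F_y$ with compact support, then combine Lemma~\ref{lem:finite_support} with compactness via the finite intersection property. The only cosmetic difference is that you argue by contradiction inside $K=\{f_0\ge 1/k\}$, whereas the paper works directly with the closed family $\{\supp(f_k)\cap\supp(g): g\in F_y\}$ in the compact set $\supp(f_k)$.
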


\begin{proof}
First we show that there exists
a function in \(F_y\)
with compact support.
Choose \(u\in C_0^+(Y)\)
with \(u(y)>0\), and put
\(f=T^{-1}(u)\).
Then \(T(f)(y)=u(y)>0\), so \(f\in F_y\).

Put \(f_n=(f-1/n)^+\) for \(n\in\N\).
By Lemma~\ref{lem:I_y-closed},
there exists \(k\in\N\) such that
\(f_{k}\in F_y\).

Moreover,
\[
 \supp(f_{k})
 \subset
 \left\{x\in X:f(x)\ge \frac{1}{k}\right\}.
\]
Since \(f_k\in F_y\), we have \(f_k\neq0\)
by Lemma~\ref{lem:zero_disjoint}.
Thus \(\supp(f_k)\) is non-empty.
Since \(f\in C_0(X)\),
the set \(\{x\in X:f(x)\ge 1/k\}\) is compact; hence
\(\supp(f_k)\) is a non-empty compact set.

Let \(g_1,\dots,g_n\in F_y\) be arbitrary.
Applying Lemma~\ref{lem:finite_support}
to \(f_k,g_1,\dots,g_n\), we obtain
\[
\supp(f_k)
\cap
\bigcap_{i=1}^{n}\supp(g_i)
\neq\emptyset.
\]
Hence the family
\[
A=\{\supp(f_k)\cap\supp(g):g\in F_y\}
\]
has the finite intersection property.
Since every member of \(A\) is a closed subset
of the compact set \(\supp(f_k)\),
it follows that
\[
\bigcap_{g\in F_y}
\bigl(\supp(f_k)\cap\supp(g)\bigr)
\neq\emptyset.
\]
Consequently,
\(\bigcap_{g\in F_y}\supp(g)
\neq\emptyset\).
\end{proof}

To continue the construction
of the correspondence between the underlying spaces,
we introduce the following family of functions.
Our next goal is to show that
the associated zero sets
have a non-empty intersection.

\begin{definition}
For \(y\in Y\), define
\[
 I_y=\{f\in C_0^+(X): T(f)(y)=0\}.
\]
For \(f\in C_0^+(X)\), denote its zero set by
\[
 Z(f)=\{x\in X:f(x)=0\}.
\]
\end{definition}

We prove that the family of zero sets
associated with \(I_y\)
has a non-empty intersection.
This will serve as the starting point
for constructing the correspondence
between the underlying spaces.

\begin{lemma}\label{lem:zero_intersection_nonempty}
For each \(y\in Y\),
\[
 \bigcap_{f\in I_y}Z(f)\ne\emptyset.
\]
\end{lemma}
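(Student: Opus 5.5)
The plan is to show that any point of the non-empty set \(K_y=\bigcap_{g\in F_y}\supp(g)\) supplied by Lemma~\ref{lem:all_supports_intersect} lies in every zero set \(Z(f)\) with \(f\in I_y\). So fix \(x_0\in K_y\) and \(f\in I_y\), and suppose, for a contradiction, that \(f(x_0)>0\).

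First I separate \(x_0\) from the zero set of \(f\) with a disjoint pair of functions. Set \(c=f(x_0)>0\) and \(U=\{x\in X: f(x)>c/2\}\), an open neighbourhood of \(x_0\). By Urysohn's lemma for locally compact Hausdorff spaces there is \(h\in C_0^+(X)\), compactly supported, with \(h(x_0)>0\) and \(\supp(h)\subset U\). Next put \(f'=(f-c/2)^+\). Then \(f'\le f\), so order preservation and \(T(f)(y)=0\) give \(0\le T(f')(y)\le T(f)(y)=0\); hence \(f'\in I_y\).

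The key step is to produce a function in \(F_y\) whose support misses \(x_0\). I would use \(u=T^{-1}(v)\) for some \(v\in C_0^+(Y)\) with \(v(y)>0\), so that \(u\in F_y\). The aim is to cut \(u\) down near \(x_0\) while keeping it in \(F_y\), using \(f\) as the cutting tool. Write \(u\le (u\wedge Nf)+ (u-Nf)^+\) for large \(N\). The first term is dominated by \(Nf\), so \(T(u\wedge Nf)(y)\le N\,T(f)(y)=0\). As in the proof of Lemma~\ref{lem:I_y-closed}, \(u\le 2\bigl((u\wedge Nf)\vee (u-Nf)^+\bigr)\); applying Lemma~\ref{lem:lattice} and positive homogeneity gives
\[
0<T(u)(y)\le 2\max\bigl\{T(u\wedge Nf)(y),\,T((u-Nf)^+)(y)\bigr\}=2\,T((u-Nf)^+)(y).
\]
Therefore \(g=(u-Nf)^+\in F_y\). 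However, \(g\) vanishes wherever \(Nf\ge u\), and in particular on \(\{f> \|u\|/N\}\). Choosing \(N\) with \(\|u\|/N<c\), this set is an open neighbourhood of \(x_0\). Hence \(x_0\notin\supp(g)\), which contradicts \(x_0\in K_y\). Consequently \(f(x_0)=0\) for every \(f\in I_y\), and \(K_y\subset\bigcap_{f\in I_y}Z(f)\), which is therefore non-empty.

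The main obstacle is this cutting step: it has to show that subtracting a multiple of an \(I_y\)-function keeps a function in \(F_y\). It rests on the inequality \(u\le 2(a\vee b)\) for \(a+b\ge u\), which needs \(a+b\ge u\) pointwise. That holds because \(u\wedge Nf+(u-Nf)^+\ge u\). Once this holds, the auxiliary \(h\) and \(f'\) are unnecessary and the argument reduces to the displayed inequality.
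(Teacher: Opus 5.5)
Your proof is correct, and its key step differs from the paper's. Both proofs fix \(x_0\in\bigcap_{g\in F_y}\supp(g)\) and show \(f(x_0)=0\) for \(f\in I_y\), but the paper works on the \(Y\) side. It truncates \(T(f)\) to \(u_n=(T(f)-1/n)^+\) and uses Urysohn's lemma in \(Y\) to pick \(v_n\) with \(v_n(y)>0\) supported in \(\{T(f)<1/n\}\). Disjointness preservation (Lemma~\ref{lem:zero_disjoint} for \(T^{-1}\)) together with \(T^{-1}(v_n)\in F_y\) then gives \(T^{-1}(u_n)(x_0)=0\) for all \(n\). Finally it applies the truncation Lemma~\ref{lem:I_y-closed} to \(T^{-1}\), and hence the boundedness Lemma~\ref{lem:bounded} for \(T^{-1}\), to conclude \(f(x_0)=0\).

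You stay in \(X\). The decomposition \(u=(u\wedge Nf)+(u-Nf)^+\) (actually an equality) and the estimate \(T(u\wedge Nf)(y)\le N\,T(f)(y)=0\) show directly that \((u-Nf)^+\in F_y\). This function vanishes on the open set \(\{f>\|u\|/N\}\ni x_0\), which gives the contradiction. It is the same \(2\max\) trick as in Lemma~\ref{lem:I_y-closed}, but the discarded piece is killed exactly at \(y\) by monotonicity and homogeneity, so no norm estimate is needed. Your step therefore uses neither boundedness, nor Urysohn's lemma in \(Y\), nor Lemma~\ref{lem:zero_disjoint}. It also works for any \(u\in F_y\); choosing \(u=T^{-1}(v)\) only serves to show \(F_y\neq\emptyset\).

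Boundedness is still needed upstream, because Lemma~\ref{lem:all_supports_intersect} relies on Lemma~\ref{lem:I_y-closed} to obtain a compactly supported member of \(F_y\). The paper's route, for its part, gains a symmetric structure by reusing the truncation lemma for \(T^{-1}\). The auxiliary functions \(h\) and \(f'\) in your first paragraph are unused, as you note, and can simply be deleted.
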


\begin{proof}
By Lemma~\ref{lem:all_supports_intersect},
there exists
\[
 x_0\in \bigcap_{g\in F_y}\supp(g).
\]
We prove that \(f(x_0)=0\) for every \(f\in I_y\).
If \(f=0\), there is nothing to prove.
Assume \(f\neq0\).
Then \(T(f)\ne0\), since \(T(0)=0\).

For each \(n\in\N\) and \(f\in I_y\), put
\[
 u_n=\left(T(f)-\frac1n\right)^+\in C_0^+(Y).
\]
Then
\(\supp(u_n)\subset \left\{z\in Y: T(f)(z)\ge1/n\right\}\).
Since \(T(f)\) is continuous and \(T(f)(y)=0\), the set
\(V_n=\left\{z\in Y: T(f)(z)<1/n\right\}\)
is an open neighborhood of \(y\).

By Urysohn's lemma, there exists
\(v_n\in C_0^+(Y)\) such that
\(v_n(y)>0\) and \(\supp(v_n)\subset V_n\).
Consequently,
\(\supp(u_n)\cap\supp(v_n)=\emptyset\).
Equivalently,
\(u_n\wedge v_n=0\).
By Lemma~\ref{lem:zero_disjoint},
applied to \(T^{-1}\), we have
\(T^{-1}(u_n)\wedge T^{-1}(v_n)=0\).
Put \(f_n=T^{-1}(u_n)\) and
\(g_n=T^{-1}(v_n)\), and then
\(f_n\wedge g_n=0\).
Thus
\(\coz(f_n)\cap \coz(g_n)=\emptyset\).
By the continuity of \(f_n,g_n\),
both \(\coz(f_n)\) and \(\coz(g_n)\)
are open subsets of \(X\).
Therefore
\[
 \coz(f_n)\cap\overline{\coz(g_n)}
=\emptyset.
\]
Since \(v_n(y)>0\), we have \(g_n=T^{-1}(v_n)\in F_y\).
Hence, by the choice of \(x_0\),
\[
 x_0\in \supp(g_n)=\overline{\coz(g_n)}.
\]
Since \(\coz(f_n)\cap\overline{\coz(g_n)}=\emptyset\),
we obtain \(x_0\notin \coz(f_n)\).
Therefore
\(T^{-1}(u_n)(x_0)=f_n(x_0)=0\)
for every \(n\in\N\).
Put
\[
G_{x_0}=\{v\in C_0^+(Y):T^{-1}(v)(x_0)>0\}.
\]
Apply Lemma~\ref{lem:I_y-closed} to the positive homogeneous
order isomorphism
\(T^{-1}:C_0^+(Y)\to C_0^+(X)\).
If \(T(f)\in G_{x_0}\), then there exists \(k\in\N\) such that
\((T(f)-1/k)^+\in G_{x_0}\).
However, since \(u_n=(T(f)-1/n)^+\) and
\(T^{-1}(u_n)(x_0)=0\) for every \(n\in\N\), we have
\(u_n\notin G_{x_0}\) for every \(n\in\N\).
This is a contradiction. Hence \(T(f)\notin G_{x_0}\).
Thus
\(f(x_0)=T^{-1}(T(f))(x_0)=0\)
for all \(f\in I_y\), and so
\(x_0\in \bigcap_{f\in I_y}Z(f)\).
\end{proof}

We now show that the intersection
of the zero sets associated with \(I_y\)
consists of exactly one point.
This enables us to define
the correspondence between
the underlying spaces.

\begin{lemma}\label{lem:zero_intersection_singleton}
For each \(y\in Y\), the set
\[
 \bigcap_{f\in I_y}Z(f)
\]
is a singleton.
\end{lemma}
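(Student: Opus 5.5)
Non-emptiness is already given by Lemma~\ref{lem:zero_intersection_nonempty}, so the plan is to prove uniqueness: if \(x_1\neq x_2\) both lie in \(\bigcap_{f\in I_y}Z(f)\), we derive a contradiction.

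First, since \(X\) is locally compact Hausdorff, we choose disjoint open neighbourhoods \(U_1\ni x_1\) and \(U_2\ni x_2\). Urysohn's lemma then gives \(f_1,f_2\in C_0^+(X)\) with \(f_i(x_i)>0\), \(\supp(f_i)\subset U_i\), and hence \(f_1\wedge f_2=0\).

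Next, by Lemma~\ref{lem:zero_disjoint}(ii) we get \(T(f_1)\wedge T(f_2)=0\). Consequently at least one of \(T(f_1)(y)\), \(T(f_2)(y)\) vanishes, say \(T(f_1)(y)=0\). Then \(f_1\in I_y\), so \(x_1\in Z(f_1)\), i.e.\ \(f_1(x_1)=0\). This contradicts \(f_1(x_1)>0\). If instead \(T(f_2)(y)=0\), the same argument applied to \(x_2\) gives the contradiction.

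Hence the intersection contains exactly one point. There is no real obstacle here. The only points to check are that Urysohn's lemma in its locally compact form yields compactly supported functions in \(C_0^+(X)\) with disjoint supports, and that the disjointness criterion of Lemma~\ref{lem:zero_disjoint} transfers through \(T\). The nontrivial work, namely non-emptiness, has already been done in Lemma~\ref{lem:zero_intersection_nonempty}.
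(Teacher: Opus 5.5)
Your proposal is correct and follows the paper's proof: non-emptiness comes from Lemma~\ref{lem:zero_intersection_nonempty}, and uniqueness comes from Urysohn functions \(f_1,f_2\) with \(f_1\wedge f_2=0\), transferred through \(T\) by Lemma~\ref{lem:zero_disjoint}(ii) and evaluated at \(y\) to force some \(f_i\in I_y\) with \(f_i(x_i)>0\). Your explicit choice of disjoint neighbourhoods \(U_1,U_2\) only spells out the Urysohn step that the paper leaves implicit.
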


\begin{proof}
First, this intersection is non-empty
by Lemma~\ref{lem:zero_intersection_nonempty}.
Suppose that this intersection contains
two distinct points \(x_1,x_2\in X\).
Applying Urysohn's lemma,
there exist \(f_1,f_2\in C_0^+(X)\) such that
\[
 f_1(x_1)>0,\qquad
 f_2(x_2)>0,\qquad
 f_1\wedge f_2=0.
\]
By Lemma~\ref{lem:zero_disjoint},
\(T(f_1)\wedge T(f_2)=0\).
Evaluating at \(y\), we obtain
\(\min\{T(f_1)(y),T(f_2)(y)\}=0\).
There exists \(i\in\{1,2\}\) such that
\(T(f_i)(y)=0\).
Then \(f_i\in I_y\).
Since \(x_i\in\bigcap_{f\in I_y}Z(f)\),
we obtain \(x_i\in Z(f_i)\),
and hence \(f_i(x_i)=0\),
a contradiction.
Therefore the intersection
is a singleton.
\end{proof}

By associating each point \(y\in Y\)
with the common zero of the family \(I_y\),
we obtain a mapping from \(Y\) to \(X\).
This mapping will be the essential ingredient
in showing that the order isomorphism
\(T:C_0^+(X)\to C_0^+(Y)\)
is a weighted composition operator.

\begin{definition}\label{def:tau}
By Lemma~\ref{lem:zero_intersection_singleton},
for each \(y\in Y\), the set
\[
 \bigcap_{f\in I_y}Z(f)
\]
consists of exactly one point.
We denote this point by \(\tau(y)\in X\). Thus
\[
 \bigcap_{f\in I_y}Z(f)=\{\tau(y)\}.
\]
This defines a mapping \(\tau:Y\to X\).

Applying the preceding construction to the order isomorphism
\(T^{-1}:C_0^+(Y)\to C_0^+(X)\),
we obtain a mapping
\(\sigma:X\to Y\).
For each \(x\in X\), it is characterized as follows. If
\[
 J_x=\{v\in C_0^+(Y): T^{-1}(v)(x)=0\},
\]
then
\[
 \bigcap_{v\in J_x}Z(v)=\{\sigma(x)\}.
\]
\end{definition}

We next show that the mappings
constructed above are bijective.
This is the first step toward proving
that \(\tau\) is a homeomorphism.

\begin{lemma}\label{lem:tau_bijective}
The mappings \(\tau:Y\to X\) and
\(\sigma:X\to Y\) are bijective
with \(\sigma=\tau^{-1}\).
\end{lemma}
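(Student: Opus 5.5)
It suffices to show \(\sigma\circ\tau=\mathrm{id}_Y\) and \(\tau\circ\sigma=\mathrm{id}_X\); by the symmetry of the construction (interchanging \(T\) and \(T^{-1}\)), it is enough to prove \(\sigma(\tau(y))=y\) for every \(y\in Y\). The first step is a pointwise characterization of \(\tau\):
\[
 T(f)(y)=0 \quad\Longleftrightarrow\quad f(\tau(y))=0 \qquad (f\in C_0^+(X)).
\]
The implication "\(\Rightarrow\)" is immediate from the definition of \(\tau(y)\), since \(f\in I_y\) gives \(\tau(y)\in Z(f)\).

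The converse is the main step. Suppose \(f(\tau(y))=0\) but \(T(f)(y)>0\), i.e.\ \(f\in F_y\). By Lemma~\ref{lem:I_y-closed} there is \(k\) with \(f_k=(f-1/k)^+\in F_y\). The support of \(f_k\) lies in \(\{f\ge 1/k\}\), which is a closed set not containing \(\tau(y)\). I expect the main obstacle to be showing that this is impossible, i.e.\ that \(\tau(y)\in\supp(g)\) for every \(g\in F_y\).

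To settle this, choose by Urysohn's lemma some \(h\in C_0^+(X)\) with \(h(\tau(y))>0\) and \(h\wedge f_k=0\). By Lemma~\ref{lem:zero_disjoint}, \(T(h)\wedge T(f_k)=0\). Since \(T(f_k)(y)>0\), this forces \(T(h)(y)=0\), so \(h\in I_y\). The definition of \(\tau\) then gives \(h(\tau(y))=0\), a contradiction. This proves the equivalence above. Symmetrically, for \(T^{-1}\) we get \(T^{-1}(v)(x)=0\Leftrightarrow v(\sigma(x))=0\) for \(v\in C_0^+(Y)\).

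Now fix \(y\), put \(x=\tau(y)\), and suppose \(\sigma(x)\neq y\). By Urysohn's lemma choose \(v\in C_0^+(Y)\) with \(v(y)>0\) and \(v(\sigma(x))=0\). The symmetric characterization gives \(T^{-1}(v)(x)=0\). Setting \(f=T^{-1}(v)\), we have \(f(\tau(y))=0\), so the first characterization yields \(v(y)=T(f)(y)=0\), a contradiction. Hence \(\sigma\circ\tau=\mathrm{id}_Y\). By the same argument with the roles of \(T\) and \(T^{-1}\) exchanged, \(\tau\circ\sigma=\mathrm{id}_X\). Therefore both maps are bijective and \(\sigma=\tau^{-1}\).
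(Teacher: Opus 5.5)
Your proof is correct, but it is organized differently from the paper's. You first prove the full equivalence \(T(f)(y)=0\Leftrightarrow f(\tau(y))=0\), using an analytic argument for the hard direction: truncation via Lemma~\ref{lem:I_y-closed}, an Urysohn function \(h\) with \(h\wedge(f-1/k)^+=0\), and Lemma~\ref{lem:zero_disjoint}. You then do the same for \(T^{-1}\) and \(\sigma\), and combine the two hard directions to get \(\sigma(\tau(y))=y\).

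The paper needs only the two definitional, easy directions. If \(T(f)(y)=0\), then \(f(\tau(y))=0\), i.e.\ \(T^{-1}(T(f))(\tau(y))=0\). So \(T(f)\in J_{\tau(y)}\), and therefore \(T(f)(\sigma(\tau(y)))=0\). If \(\sigma(\tau(y))\neq y\), surjectivity of \(T\) supplies \(f_0\) with \(T(f_0)(y)=0\) and \(T(f_0)(\sigma(\tau(y)))=1\), a contradiction. The equivalence you worked for is then recovered cheaply in Lemma~\ref{lem:zero_equiv}, from bijectivity together with the easy direction for \(T^{-1}\).

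The paper's route is therefore shorter and needs no further analysis at this stage; your truncation argument is not actually required for bijectivity. What your approach buys is that it establishes Lemma~\ref{lem:zero_equiv} independently of bijectivity, so the two lemmas could be merged into one. It also shows directly that every \(g\in F_y\) satisfies \(g(\tau(y))>0\), which sharpens the information from Lemma~\ref{lem:zero_intersection_nonempty}.
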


\begin{proof}
Let \(y\in Y,x\in X\).
From the definitions of \(\tau\) and \(\sigma\), we have
\[
f(\tau(y))=0\quad
(f\in I_y),\qquad
v(\sigma(x))=0\quad
(v\in J_x).
\]
For each \(f\in I_y\), we obtain
\(T^{-1}(T(f))(\tau(y))=f(\tau(y))=0\),
and so \(T(f)\in J_{\tau(y)}\).
Thus \(T(f)(\sigma(\tau(y)))=0\).
Since \(f\in I_y\), we have
\(T(f)(y)=0\).
Therefore
\[
T(f)(\sigma(\tau(y)))=T(f)(y)=0
\qquad(f\in I_y).
\]
This shows that \(\sigma(\tau(y))=y\).
Indeed, suppose that
\(\sigma(\tau(y))\neq y\).
Since \(T\) is surjective and
\(C_0^+(Y)\) separates points,
there exists \(f_0\in C_0^+(X)\) such that
\[
T(f_0)(\sigma(\tau(y)))=1,
\qquad
T(f_0)(y)=0.
\]
Then \(f_0\in I_y\) by definition.
The preceding equality shows
\(T(f_0)(\sigma(\tau(y)))=0\),
a contradiction.
Hence \(\sigma\circ\tau\) is the identity
mapping on \(Y\).
By the symmetric argument,
we obtain \(\tau(\sigma(x))=x\).
Thus \(\tau\circ\sigma\) is the identity
mapping on \(X\).
Therefore \(\tau\) and \(\sigma\) are
bijective with \(\sigma=\tau^{-1}\).
\end{proof}

We now describe the correspondence
between the families \(I_y\) and \(J_x\)
in terms of the mapping \(\tau\).

\begin{lemma}\label{lem:zero_equiv}
For every \(y\in Y\),
\(T(I_y)=J_{\tau(y)}\).
That is, for every \(f\in C_0^+(X)\)
and \(y\in Y\),
\[
 T(f)(y)=0
 \quad\Longleftrightarrow\quad
 f(\tau(y))=0.
\]
\end{lemma}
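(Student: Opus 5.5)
The forward implication is immediate from the construction of \(\tau\). If \(T(f)(y)=0\), then \(f\in I_y\). By Definition~\ref{def:tau}, \(\tau(y)\in\bigcap_{g\in I_y}Z(g)\subset Z(f)\), so \(f(\tau(y))=0\). Put differently, \(T(I_y)\subset J_{\tau(y)}\), since \(T^{-1}(T(f))(\tau(y))=f(\tau(y))=0\).

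For the reverse implication I would apply the same reasoning to \(T^{-1}\) and then use Lemma~\ref{lem:tau_bijective}. Suppose \(f(\tau(y))=0\) and set \(v=T(f)\). Then \(T^{-1}(v)(\tau(y))=0\), so \(v\in J_{\tau(y)}\). By the characterization of \(\sigma\) in Definition~\ref{def:tau}, \(\sigma(\tau(y))\in Z(v)\). Lemma~\ref{lem:tau_bijective} gives \(\sigma(\tau(y))=y\), hence \(T(f)(y)=v(y)=0\). This proves \(J_{\tau(y)}\subset T(I_y)\). Combined with the first paragraph, and using that \(T\) is a bijection of \(C_0^+(X)\) onto \(C_0^+(Y)\), we get \(T(I_y)=J_{\tau(y)}\), which is exactly the stated equivalence for every \(f\).

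I do not expect a genuine obstacle here. The only point needing care is to use the definition of \(\sigma\) for the inverse map \(T^{-1}\), namely \(\bigcap_{v\in J_x}Z(v)=\{\sigma(x)\}\) with \(x=\tau(y)\), rather than trying to prove the converse directly from the \(I_y\)-side. Once Lemma~\ref{lem:tau_bijective} identifies \(\sigma(\tau(y))\) with \(y\), the argument is symmetric and reduces to unwinding the definitions.
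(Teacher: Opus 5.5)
Your proposal is correct and follows essentially the same route as the paper. The inclusion \(T(I_y)\subset J_{\tau(y)}\) comes directly from the definition of \(\tau\). The reverse inclusion comes from the characterization of \(\sigma\) via \(J_{\tau(y)}\), together with \(\sigma(\tau(y))=y\) from Lemma~\ref{lem:tau_bijective}, which is proved without using the present lemma, so there is no circularity.
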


\begin{proof}
If \(v\in T(I_y)\), then \(v=T(f)\)
for some \(f\in I_y\).
Then \(\tau(y)\in Z(f)\)
by the definition of \(\tau(y)\),
and thus \(f(\tau(y))=0\).
We obtain \(v\in J_{\tau(y)}\),
because \(T^{-1}(v)(\tau(y))=f(\tau(y))=0\).
This shows that
\(T(I_y)\subset J_{\tau(y)}\).

Conversely, if \(v\in J_{\tau(y)}\),
then \(\sigma(\tau(y))\in Z(v)\)
by definition.
By Lemma~\ref{lem:tau_bijective},
we obtain \(v(y)=v(\sigma(\tau(y)))=0\).
Thus \(T(T^{-1}(v))(y)=0\),
and hence \(T^{-1}(v)\in I_y\).
Therefore \(v\in T(I_y)\),
proving \(J_{\tau(y)}\subset T(I_y)\).
This completes the proof.
\end{proof}

We show that the correspondence
\(\tau:Y\to X\)
is in fact a homeomorphism.
This property will be crucial
for representing \(T\)
as a weighted composition operator.

\begin{lemma}\label{lem:tau_homeomorphism}
The mapping \(\tau:Y\to X\) is a homeomorphism.
\end{lemma}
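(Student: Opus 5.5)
By Lemma~\ref{lem:tau_bijective}, \(\tau\) is a bijection with inverse \(\sigma\). Moreover, \(\sigma\) arises from \(T^{-1}\) by exactly the same construction that produces \(\tau\) from \(T\). It therefore suffices to prove that \(\tau\) is continuous; the same argument applied to \(T^{-1}\) then gives continuity of \(\sigma=\tau^{-1}\). The only tool needed is the zero-set correspondence of Lemma~\ref{lem:zero_equiv}, read as a statement about cozero sets:
\[
 y\in\coz(T(f))\quad\Longleftrightarrow\quad \tau(y)\in\coz(f)
 \qquad (f\in C_0^+(X)).
\]
This is just the contrapositive form of \(T(f)(y)=0\Leftrightarrow f(\tau(y))=0\), using that all functions involved are nonnegative.

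The plan is as follows. Fix \(y_0\in Y\) and an open neighbourhood \(U\) of \(\tau(y_0)\) in \(X\). Since \(X\) is locally compact Hausdorff, Urysohn's lemma gives \(f\in C_0^+(X)\) with \(f(\tau(y_0))>0\) and \(\supp(f)\subset U\); in particular \(\coz(f)\subset U\). Put \(W=\coz(T(f))\). This set is open in \(Y\) because \(T(f)\) is continuous. By the displayed equivalence, \(y_0\in W\), since \(\tau(y_0)\in\coz(f)\). For every \(y\in W\), the same equivalence gives \(\tau(y)\in\coz(f)\subset U\). Hence \(\tau(W)\subset U\), which proves that \(\tau\) is continuous at \(y_0\).

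Equivalently, the equivalence says \(\tau^{-1}(\coz(f))=\coz(T(f))\). Since cozero sets of functions in \(C_0^+(X)\) form a base for the topology of a locally compact Hausdorff space (again by Urysohn), this already shows that \(\tau\) is continuous. Applying the identical reasoning to \(T^{-1}\), whose associated map is \(\sigma\) by Definition~\ref{def:tau}, gives \(\sigma^{-1}(\coz(v))=\coz(T^{-1}(v))\) for \(v\in C_0^+(Y)\), so \(\sigma=\tau^{-1}\) is continuous as well.

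I expect no real obstacle here, since the substantive work was done in Lemma~\ref{lem:zero_equiv}. The only point requiring care is to invoke the version of Lemma~\ref{lem:zero_equiv} for \(T^{-1}\) with the correct roles, namely \(T^{-1}(v)(x)=0\Leftrightarrow v(\sigma(x))=0\). This follows directly from Lemma~\ref{lem:zero_equiv} by substituting \(f=T^{-1}(v)\) and \(y=\sigma(x)\), and using \(\tau(\sigma(x))=x\) from Lemma~\ref{lem:tau_bijective}.
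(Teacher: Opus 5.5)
Your proof is correct and takes essentially the same approach as the paper. You pick $f$ with $f(\tau(y_0))>0$ and $\supp(f)\subset U$, use Lemma~\ref{lem:zero_equiv} to see that $\coz(T(f))$ is an open neighbourhood of $y_0$ that $\tau$ maps into $\coz(f)\subset U$, and then argue symmetrically for $T^{-1}$. Your explicit derivation of the $T^{-1}$-version of Lemma~\ref{lem:zero_equiv}, by substituting $f=T^{-1}(v)$ and $y=\sigma(x)$, makes precise what the paper only states as ``the same argument applied to $T^{-1}$''.
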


\begin{proof}
We first show that \(\tau\) is continuous.
Let \(y\in Y\),
and let \(U\) be an open neighborhood
of \(\tau(y)\) in \(X\).
Since \(X\) is locally compact Hausdorff,
there exists a relatively compact
open neighborhood \(O\) of \(\tau(y)\) such that
\[
 \tau(y)\in O\subset \overline{O}\subset U.
\]
Choose \(f\in C_0^+(X)\) such that
\[
 f(\tau(y))>0,\qquad \supp(f)\subset O\subset U.
\]
By Lemma~\ref{lem:zero_equiv}, \(T(f)(y)>0\). Hence
\(\coz(T(f))\)
is an open neighborhood of \(y\).

For \(z\in\coz(T(f))\), we have \(T(f)(z)>0\).
By Lemma~\ref{lem:zero_equiv},
this implies \(f(\tau(z))>0\). Thus
\[
 \tau(z)\in \coz(f)\subset \supp(f)\subset U.
\]
Hence \(\coz(T(f))\subset \tau^{-1}(U)\),
and \(\tau\) is continuous at \(y\).

The same argument applied to \(T^{-1}\)
shows that the inverse map
\(\sigma=\tau^{-1}\) is continuous.
Therefore \(\tau\) is a homeomorphism.
\end{proof}

\subsection{Locality and determination of values}

The next lemma shows that
local agreement of functions
near \(\tau(y)\)
determines the value of \(T\)
at the point \(y\).
This is the first step toward
constructing the weight function.

\begin{lemma}\label{lem:local_equality}
Let \(f,g\in C_0^+(X)\) and \(y\in Y\).
Suppose that there exists an open neighborhood
\(U\) of \(\tau(y)\) such that
\[
 f|_U=g|_U.
\]
Then
\[
 T(f)(y)=T(g)(y).
\]
\end{lemma}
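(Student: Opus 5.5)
The plan is to express \(f\vee g\) in two ways through the lattice operations, and then use Lemma~\ref{lem:lattice} together with the zero-set characterization of Lemma~\ref{lem:zero_equiv}. That is, I will show that
\[
T(f\vee g)(y)=T(f)(y)\quad\text{and}\quad T(f\vee g)(y)=T(g)(y),
\]
which gives the claim. By the symmetry of \(f\) and \(g\) in the hypothesis, it suffices to prove the first equality.

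First, I use Urysohn's lemma on the locally compact Hausdorff space \(X\) to choose a continuous function \(\varphi:X\to[0,1]\) with compact support contained in \(U\) and \(\varphi(\tau(y))=1\). Then I put
\[
p=g\cdot(1-\varphi).
\]
Since \(0\le p\le g\), \(p\) is continuous, nonnegative and vanishes at infinity, so \(p\in C_0^+(X)\). Next I check pointwise that \(f\vee g=f\vee p\).
\begin{itemize}
\item On \(U\), we have \(g=f\). Hence \(p=g(1-\varphi)\le g=f\), and so \(\max\{f,p\}=f=\max\{f,g\}\).
\item Outside \(U\), we have \(\varphi=0\). Hence \(p=g\), and both sides equal \(\max\{f,g\}\).
\end{itemize}

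By Lemma~\ref{lem:lattice}, \(T(f\vee g)=T(f)\vee T(p)\). Moreover \(p(\tau(y))=g(\tau(y))(1-\varphi(\tau(y)))=0\), so Lemma~\ref{lem:zero_equiv} gives \(T(p)(y)=0\). Therefore
\[
T(f\vee g)(y)=\max\{T(f)(y),0\}=T(f)(y).
\]
Interchanging the roles of \(f\) and \(g\), using the same \(\varphi\) and the function \(f(1-\varphi)\), gives \(T(f\vee g)(y)=T(g)(y)\). This proves \(T(f)(y)=T(g)(y)\).

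The only delicate point is the choice of the auxiliary function \(p\). It must be a genuine element of \(C_0^+(X)\) that vanishes at \(\tau(y)\), and it must reproduce \(f\vee g\) when joined with \(f\). The cutoff \(\varphi\), equal to \(1\) at \(\tau(y)\) and supported inside \(U\), achieves both. Everything after that is a direct application of Lemmas~\ref{lem:lattice} and \ref{lem:zero_equiv}. Positive homogeneity and boundedness are not needed for this step.
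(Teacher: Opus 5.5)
Your proof is correct, but it takes a different route from the paper.

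The paper localizes on the \(Y\) side. It picks a relatively compact open \(O\) with \(\overline{O}\subset U\) and takes \(v\in C_0^+(Y)\) with \(\supp(v)\subset\sigma(O)=\tau^{-1}(O)\) and \(v(y)>\max\{T(f)(y),T(g)(y)\}\). It pulls this back to \(h=T^{-1}(v)\), shows \(\supp(h)\subset U\) via Lemma~\ref{lem:zero_equiv} applied to \(T^{-1}\), and then applies \(T\) to \(f\wedge h=g\wedge h\). Evaluating at \(y\) gives \(\min\{T(f)(y),v(y)\}=\min\{T(g)(y),v(y)\}\). That argument needs \(\sigma(O)\) to be open, i.e.\ the continuity of \(\tau\) from Lemma~\ref{lem:tau_homeomorphism}. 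It also needs \(v(y)\) chosen large so the minimum does not truncate the values.

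You instead localize on the \(X\) side with the cutoff \(\varphi\) and use joins. The identity \(f\vee g=f\vee g(1-\varphi)\) holds, and the perturbation \(g(1-\varphi)\) vanishes at \(\tau(y)\), so Lemma~\ref{lem:zero_equiv} sends its image to zero at \(y\). Your argument therefore uses only Lemmas~\ref{lem:lattice} and~\ref{lem:zero_equiv}. It relies on the pointwise bijective correspondence \(\tau\), not on its continuity, and it is shorter.

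The same device also gives Lemma~\ref{lem:local_order} directly. If \(f\le g\) on \(U\), then \(f\vee g=g\vee f(1-\varphi)\), and hence \(T(f)(y)\le T(f\vee g)(y)=T(g)(y)\).
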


\begin{proof}
Choose a relatively compact
open neighborhood \(O\) of \(\tau(y)\)
such that
\[
 \tau(y)\in O\subset \overline{O}\subset U.
\]
By Lemmas~\ref{lem:tau_bijective}
and \ref{lem:tau_homeomorphism},
\(\sigma(O)\) is an open neighborhood of \(y\).

Choose \(v\in C_0^+(Y)\) such that
\[
 v(y)>\max\{T(f)(y),T(g)(y)\},
 \qquad
 \supp(v)\subset\sigma(O).
\]
Put
\(h=T^{-1}(v)\in C_0^+(X)\).
We claim that \(\supp(h)\subset \overline{O}\subset U\).
Indeed, let \(z\in X\setminus\overline{O}\).
Since \(\sigma=\tau^{-1}\) is injective, we have
\(\sigma(z)\notin \sigma(\overline{O})\).
Since
\[
 \supp(v)\subset\sigma(O)\subset \sigma(\overline{O}),
\]
it follows that
\(v(\sigma(z))=0\).
By Lemma~\ref{lem:zero_equiv}, applied to \(T^{-1}\), we get
\[
 h(z)=T^{-1}(v)(z)=0.
\]
Thus \(\supp(h)\subset\overline{O}\subset U\).

Since \(f=g\) on \(U\) and \(h=0\) off \(U\), we have
\(f\wedge h=g\wedge h\)
on \(X\). Applying \(T\) and using Lemma~\ref{lem:lattice},
\(T(f)\wedge T(h)=T(g)\wedge T(h)\).
Since \(T(h)=v\), this becomes
\(T(f)\wedge v=T(g)\wedge v\).
Evaluating at \(y\),
\[
 \min\{T(f)(y),v(y)\}=\min\{T(g)(y),v(y)\}.
\]
By the choice of \(v\), this implies \(T(f)(y)=T(g)(y)\).
\end{proof}

To show that \(T\) is a weighted composition operator,
it is important to understand how
local order relations near \(\tau(y)\)
are reflected in the values of \(T\) at \(y\).
The following lemma provides precisely this property.

\begin{lemma}\label{lem:local_order}
Let \(f,g\in C_0^+(X)\) and \(y\in Y\).
Suppose that there exists an open
neighborhood \(U\) of \(\tau(y)\) such that
\[
 f|_U\le g|_U.
\]
Then
\[
 T(f)(y)\le T(g)(y).
\]
\end{lemma}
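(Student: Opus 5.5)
The plan is to reduce the local inequality to the local equality of Lemma~\ref{lem:local_equality} by means of the lattice operations. The natural auxiliary function is
\[
 h=f\wedge g\in C_0^+(X).
\]
On the neighbourhood \(U\) of \(\tau(y)\) we have \(f\le g\). Hence \(h|_U=\min\{f,g\}|_U=f|_U\), so \(f\) and \(h\) agree on \(U\).

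The argument then runs in three steps.

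\begin{enumerate}[label=(\arabic*)]
\item Since \(f\) and \(h\) agree on the open neighbourhood \(U\) of \(\tau(y)\), Lemma~\ref{lem:local_equality} gives \(T(f)(y)=T(h)(y)\).
\item By Lemma~\ref{lem:lattice}, \(T(h)=T(f\wedge g)=T(f)\wedge T(g)\). Alternatively, it suffices that \(h\le g\) and that \(T\) is order preserving, which gives \(T(h)\le T(g)\).
\item Evaluating at \(y\), we obtain
\[
 T(f)(y)=T(h)(y)\le T(g)(y),
\]
which is the claim.
\end{enumerate}

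I do not expect a genuine obstacle. All the analytic work (boundedness, the truncations, and the construction of the bump \(v\) supported near \(y\)) is already contained in Lemma~\ref{lem:local_equality}. The only point to check is that \(f\wedge g\) lies in \(C_0^+(X)\), which is clear because the minimum of two continuous nonnegative functions vanishing at infinity is again such a function. One could equally use \(f\vee g\), which agrees with \(g\) on \(U\) and dominates \(f\); this gives \(T(f)(y)\le T(f\vee g)(y)=T(g)(y)\) by the same two lemmas.
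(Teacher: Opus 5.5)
Your proof is correct and is essentially the paper's argument. The paper uses \(h=f\vee g\), which agrees with \(g\) on \(U\), together with Lemma~\ref{lem:local_equality} and monotonicity of \(T\); this is exactly the alternative you mention at the end. Your main version with \(f\wedge g\) is the mirror image of the same idea.
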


\begin{proof}
Put \(h=f\vee g\).
The assumption gives
\(h|_U=g|_U\).
By Lemma~\ref{lem:local_equality},
\[
 T(h)(y)=T(g)(y).
\]
Since \(f\le h\),
the order-preserving property of \(T\) gives
\(T(f)\le T(h)\).
Evaluating at \(y\), we get
\(T(f)(y)\le T(h)(y)=T(g)(y)\).
\end{proof}

We are now ready to show that
the order isomorphism \(T\)
admits a weighted composition representation.

\begin{lemma}\label{lem:weighted_pointwise}
For each \(y\in Y\), there exists a constant
\(\alpha(y)>0\) such that
\[
 T(f)(y)=\alpha(y)f(\tau(y))
\]
for all \(f\in C_0^+(X)\).
\end{lemma}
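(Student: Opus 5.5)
Fix \(y\in Y\) and put \(x_0=\tau(y)\). The plan is to compare an arbitrary \(f\) with a fixed reference function, using the local order lemma (Lemma~\ref{lem:local_order}) and positive homogeneity. By Urysohn's lemma, choose \(e\in C_0^+(X)\) with \(e\equiv 1\) on an open neighborhood \(W\) of \(x_0\). Define \(\alpha(y)=T(e)(y)\). Since \(e(x_0)=1>0\), Lemma~\ref{lem:zero_equiv} gives \(\alpha(y)>0\). By Lemma~\ref{lem:local_equality}, \(\alpha(y)\) does not depend on the choice of \(e\): any two such functions agree on the intersection of their neighborhoods.

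Now let \(f\in C_0^+(X)\) be arbitrary. If \(f(x_0)=0\), then \(T(f)(y)=0\) by Lemma~\ref{lem:zero_equiv}, and the formula holds trivially. Suppose \(c=f(x_0)>0\), and fix \(\varepsilon\) with \(0<\varepsilon<c\). By continuity of \(f\), there is an open neighborhood \(U\subset W\) of \(x_0\) on which
\[
 (c-\varepsilon)e \le f \le (c+\varepsilon)e ,
\]
since \(e\equiv1\) on \(W\). Lemma~\ref{lem:local_order} then gives
\[
 T\bigl((c-\varepsilon)e\bigr)(y)\le T(f)(y)\le T\bigl((c+\varepsilon)e\bigr)(y).
\]
Positive homogeneity turns this into
\[
 (c-\varepsilon)\alpha(y)\le T(f)(y)\le (c+\varepsilon)\alpha(y).
\]
Letting \(\varepsilon\to0\) yields \(T(f)(y)=\alpha(y)f(\tau(y))\).

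The only delicate points are the following.
\begin{itemize}
\item Positive homogeneity is only assumed for \(r>0\), so we need \(c-\varepsilon>0\). This is exactly why the case \(f(x_0)=0\) is handled separately through Lemma~\ref{lem:zero_equiv}.
\item The comparison must be genuinely local. A global inequality \(f\le(c+\varepsilon)e\) would fail, and Lemma~\ref{lem:local_order} is precisely the tool that avoids needing it.
\end{itemize}
No further obstacle is expected, since the earlier lemmas have already done the topological work.
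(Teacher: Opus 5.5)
Your proof is correct and follows essentially the same route as the paper: you fix a reference function that is positive at $\tau(y)$, sandwich $f$ locally between positive multiples of it using Lemma~\ref{lem:local_order} and positive homogeneity, and let $\varepsilon\to0$, handling the case $f(\tau(y))=0$ with Lemma~\ref{lem:zero_equiv}. The only difference is cosmetic: you normalize $e\equiv1$ near $\tau(y)$, which avoids the quotient $f/e$ and gives independence of $\alpha(y)$ through Lemma~\ref{lem:local_equality}, whereas the paper takes an arbitrary $e$ with $e(\tau(y))>0$ and sets $\alpha(y)=T(e)(y)/e(\tau(y))$.
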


\begin{proof}
Fix \(y\in Y\) and put \(x=\tau(y)\).
There exists \(e\in C_0^+(X)\) such that
\(e(x)>0\).
Let \(f\in C_0^+(X)\).

If \(f(x)=0\), then \(f(\tau(y))=0\),
and Lemma~\ref{lem:zero_equiv} gives \(T(f)(y)=0\).

Assume \(f(x)>0\).
Since \(e(x)>0\),
there exists an open neighborhood
\(O\) of \(x\) such that \(e(z)>0\)
for all \(z\in O\).
The quotient \(f/e\) is continuous on \(O\).
Let \(\varepsilon\) be fixed arbitrarily with
\[
 0<\varepsilon<\frac{f(x)}{e(x)}.
\]
Then there exists an open neighborhood
\(U\subset O\) of \(x\) such that
\[
 \frac{f(x)}{e(x)}-\varepsilon
 \le
 \frac{f(z)}{e(z)}
 \le
 \frac{f(x)}{e(x)}+\varepsilon
\]
for all \(z\in U\).
Since \(e>0\) on \(U\), we obtain
\[
 \left(\frac{f(x)}{e(x)}-\varepsilon\right)e|_U
 \le
 f|_U
 \le
 \left(\frac{f(x)}{e(x)}+\varepsilon\right)e|_U.
\]

By Lemma~\ref{lem:local_order}
and the positive homogeneity of \(T\),
\[
 \left(\frac{f(x)}{e(x)}-\varepsilon\right)T(e)(y)
 \le
 T(f)(y)
 \le
 \left(\frac{f(x)}{e(x)}+\varepsilon\right)T(e)(y).
\]
Letting \(\varepsilon\downarrow 0\), we get
\[
 T(f)(y)=\frac{f(x)}{e(x)}\,T(e)(y)
 =\frac{T(e)(y)}{e(x)}f(\tau(y)).
\]

Define
\[
 \alpha(y)=\frac{T(e)(y)}{e(x)}.
\]
This number is independent
of the particular choice of \(e\);
indeed, applying the above formula
to another admissible function \(e'\) gives
\[
 \frac{T(e')(y)}{e'(x)}
 =
 \frac{T(e)(y)}{e(x)}.
\]
Moreover, \(\alpha(y)>0\).
Indeed, if \(T(e)(y)=0\),
then Lemma~\ref{lem:zero_equiv} would imply
\[
 e(x)=e(\tau(y))=0,
\]
contrary to the choice of \(e\).
Hence \(\alpha(y)>0\).
\end{proof}

\subsection{Proof of the main theorem}

To complete the proof of Theorem~\ref{thm:main},
it remains to establish
the uniqueness of \(\tau\) and \(\alpha\),
as well as the continuity and boundedness
of the weight function \(\alpha\).

\begin{proof}[\textbf{Proof of Theorem~\ref{thm:main}}]
By Lemma~\ref{lem:tau_homeomorphism},
there exists a homeomorphism \(\tau:Y\to X\).
By Lemma~\ref{lem:weighted_pointwise},
there exists a function \(\alpha:Y\to(0,\infty)\)
such that
\[
 T(f)(y)=\alpha(y)f(\tau(y))
\]
for all \(f\in C_0^+(X)\) and \(y\in Y\).

Suppose that there exist a homeomorphism
\(\tau':Y\to X\) and a function
\(\alpha':Y\to(0,\infty)\) such that
\[
T(f)(y)=\alpha'(y)f(\tau'(y))\qquad
(f\in C_0^+(X),\,y\in Y).
\]
If \(\tau(y)\neq\tau'(y)\) for some \(y\in Y\),
then we could choose
\(f_0\in C_0^+(X)\) with \(f_0(\tau(y))=1\)
and \(f_0(\tau'(y))=0\).
By the preceding equalities, we obtain
\[
0<\alpha(y)
=\alpha(y)f_0(\tau(y))
=T(f_0)(y)
=\alpha'(y)f_0(\tau'(y))=0,
\]
which is a contradiction.
Therefore \(\tau=\tau'\), and hence
\(\alpha(y)f(\tau(y))=\alpha'(y)f(\tau(y))\)
for all \(f\in C_0^+(X)\) and \(y\in Y\).
For each \(y\in Y\), choose \(f_1\in C_0^+(X)\)
such that \(f_1(\tau(y))=1\).
Then
\[
\alpha(y)f_1(\tau(y))
=
T(f_1)(y)
=
\alpha'(y)f_1(\tau(y)).
\]
Since \(f_1(\tau(y))=1\), we obtain
\(\alpha(y)=\alpha'(y)\).
This proves the uniqueness of
\(\alpha\) and \(\tau\).

We show that \(\alpha\) is continuous.
Let \(y_0\in Y\), and put \(x_0=\tau(y_0)\).
Choose \(g_0\in C_0^+(X)\) such that
\(g_0(x_0)>0\).
Since \(g_0\circ\tau\) is continuous,
there exists an open neighborhood
\(V\) of \(y_0\) such that
\(g_0(\tau(y))>0\) for all \(y\in V\).
On \(V\), we have
\[
 \alpha(y)=\frac{T(g_0)(y)}{g_0(\tau(y))}.
\]
The numerator \(T(g_0)\) is continuous,
and the denominator \(g_0\circ\tau\)
is continuous and nonzero on \(V\).
Therefore \(\alpha\) is continuous on \(V\).
Since \(y_0\) was arbitrary,
\(\alpha\) is continuous on \(Y\).

We prove that \(\alpha\) is bounded.
Since $T$ is bounded by Lemma~\ref{lem:bounded},
there exists $M>0$ such that
$\|T(f)\|\leq M\|f\|$ for all $f\in C_0^+(X)$.
Let $y\in Y$.
Choose $f\in C_0^+(X)$ with
$f(\tau(y))=\|f\|=1$.
It follows that
\[
\alpha(y)
=\alpha(y)f(\tau(y))
=T(f)(y)
\leq\|T(f)\|\leq M.
\]
Since $y\in Y$ was arbitrary, this proves
that $\alpha$ is bounded.

We prove that there exists \(\delta>0\)
such that \(\alpha(Y)\subset[\delta,\infty)\).
Let \(y\in Y\).
Choose \(u_0\in C_0^+(Y)\) such that
\(u_0(y)=1\).
Substituting \(f=T^{-1}(u_0)\)
into the representation formula for \(T\),
we obtain
\[
1=u_0(y)
=T(T^{-1}(u_0))(y)
=\alpha(y)T^{-1}(u_0)(\tau(y)).
\]
Applying the preceding part of the proof
to the order isomorphism \(T^{-1}\),
we obtain a homeomorphism
\(\sigma:X\to Y\)
and a bounded continuous function
\(\beta:X\to(0,\infty)\)
such that
\[
T^{-1}(u)(x)
=
\beta(x)u(\sigma(x))
\]
for all \(u\in C_0^+(Y)\).
It follows from Lemma~\ref{lem:tau_bijective}
that \(u_0(\sigma(\tau(y)))=u_0(y)=1\).
Combining the preceding equalities gives
\[
1=\alpha(y)T^{-1}(u_0)(\tau(y))
=\alpha(y)\beta(\tau(y)).
\]
Since \(\beta\) is bounded,
\(0<\beta(\tau(y))\leq\|\beta\|<\infty\).
Thus
\[
\frac{1}{\|\beta\|}
\le\frac{1}{\beta(\tau(y))}
=\alpha(y).
\]
Therefore \(\alpha(y)\in[\delta,\infty)\)
for all \(y\in Y\) with \(\delta=1/\|\beta\|\).

Conversely, it is straightforward to verify that
the mapping defined by \eqref{eq:T}
is a
positive homogeneous
 order isomorphism.
\end{proof}

Finally, we show that
an order isomorphism between positive cones
determines the linear order structure
of the whole spaces.
For \(f\in C_0(X)\), define
its positive and negative parts by
\[
 f^+(x)=\max\{f(x),0\},\quad
 f^-(x)=\max\{-f(x),0\} \qquad (x\in X).
\]
Then \(f=f^+-f^-\) and
\(f^+,f^-\in C_0^+(X)\).

\begin{proof}[\textbf{Proof of Corollary~\ref{cor:extension}}]
Let \(T:C_0^+(X)\to C_0^+(Y)\)
be a
positive homogeneous
 order isomorphism.
By Theorem~\ref{thm:main},
there exist a constant \(\delta>0\),
a bounded continuous function
\(\alpha:Y\to[\delta,\infty)\)
and a homeomorphism
\(\tau:Y\to X\)
such that
\[
T(f)(y)=\alpha(y)f(\tau(y))
\qquad
(f\in C_0^+(X),\, y\in Y).
\]

Define a mapping
\(\widetilde T:C_0(X)\to C_0(Y)\)
by
\[
\widetilde T(f)(y)
=
\alpha(y)f(\tau(y))
\qquad
(f\in C_0(X),\, y\in Y).
\]
Then \(\widetilde T\) is a bijective linear map.
Moreover,
\[
f\le g
\quad\Longleftrightarrow\quad
\widetilde T(f)\le \widetilde T(g),
\]
since \(\alpha(y)>0\) for all \(y\in Y\).
Hence \(\widetilde T\) is a linear order isomorphism.

It remains to prove uniqueness.
Suppose that
\(T':C_0(X)\to C_0(Y)\)
is another linear order isomorphism extending \(T\).
For every \(f\in C_0(X)\), we have
\(f=f^+-f^-\).
Since \(T'\) extends \(T\),
\[
T'(f^+)=T(f^+),
\qquad
T'(f^-)=T(f^-).
\]
Therefore,
\[
T'(f)
=
T'(f^+)-T'(f^-)
=
T(f^+)-T(f^-)
=
\widetilde T(f).
\]
Hence \(T'=\widetilde T\).
\end{proof}

\section*{Acknowledgment}

The second author was supported by JST SPRING,
Grant Number JPMJSP2121.
The third author was supported by JSPS KAKENHI
Grant Number JP 25K07028.


\begin{thebibliography}{99}
\bibitem{dong}
Y.~Dong, L.~Li, L.~Moln\'ar and N.-C.~Wong,
\emph{Transformations preserving the norm
of means between positive cones
of general and commutative \(C^*\)-algebras},
J. Operator Theory \textbf{88} (2022), 365--406.

\bibitem{gao}
M.C.~Gao and G.M.~An,
\emph{Maps on Positive Cones of $C^*$-algebras},
Acta Math. Sin. Engl. Ser. \textbf{39} (2023), 2095--2118.

\bibitem{hato}
O.~Hatori and L.~Moln\'ar,
\emph{Isometries of the unitary groups
and Thompson isometries of the spaces
of invertible positive elements in \(C^*\)-algebras},
J. Math. Anal. Appl. \textbf{409} (2014), 158--167.

\bibitem{lemm}
B.~Lemmens, O. van~Gaans and H. van~Imhoff,
\emph{On the linearity of order-isomorphisms},
Canad. J. Math. \textbf{73} (2021), 1099--1133.

\bibitem{moln}
L.~Moln\'{a}r,
\emph{Applications of the automatic additivity of positive homogeneous order isomorphisms between positive definite cones in $C^*$-algebras},
in Function Spaces, Theory and Applications,
Fields Inst. Commun. \textbf{87}, Springer, 2023.

\bibitem{scha}
J.J.~Sch\"affer,
\emph{Order-isomorphisms between cones of continuous functions},
Ann. Mat. Pura Appl. (4) \textbf{119} (1979), 205--230.
\end{thebibliography}
\end{document}